\newtheorem{thm}{Theorem}
\theoremstyle{definition}
\theoremstyle{remark}
\theoremstyle{remark}
\newtheorem{rems}[thm]{Remarks}
\theoremstyle{definition}
\numberwithin{equation}{section}
\begin{document}

\title[tensor products of complete intersections]{On tensor products of complete intersections}%
\author{Javier Majadas}%
\address{Departamento de \'Algebra, Facultad de Matem\'aticas, Universidad de Santiago de Compostela, E15782 Santiago de Compostela, Spain}%
\email{j.majadas@usc.es}%

\keywords{tensor products of rings, complete intersection, regular local ring}%
\thanks{2010 {\em Mathematics Subject Classification.} 13H05, 13H10, 13D03}

\begin{abstract}
  The study of regularity and complete intersection of a tensor product of commutative algebras possessing the same property started with Grothendieck in 1965 and has continued until today. Surprisingly, the homology theory of Andr\'e and Quillen, developed by these authors in 1967, has never been used for this study. With the help of this theory, we can (slightly) generalize the results known up to now. But even more important, we hope to convince the reader that this homology theory is the adequate tool to handle these problems: the proofs are very short and (assuming some flatness hypothesis) it allows to see clearly what extra hypotheses we need.
\end{abstract}
\maketitle

In this short paper, we are concerned with the problem of giving sufficient conditions to ensure that the tensor product of two commutative algebras possessing some property of "homological type" (regularity, complete intersection, ...) has the same property. Some results were already obtained by A. Grothendieck in 1965 (see e.g. \cite [6.7.1.1, 6.7.4.1]{EGA42}). In a different context, in 1967, M. Andr\'e \cite {AnLNM} and D. Quillen \cite {MIT} develop a homology theory of commutative algebras, giving in particular some criteria for regularity and complete intersection properties that are easy to handle. Moreover, this theory contains a result which allows to study the tensor product of two algebras \cite [19.5]{AnLNM}.

The study of this kind of properties of a tensor product has continued until today (see e.g. \cite {Wa}, \cite {Sh1}, \cite {BK1}, \cite {TY}, \cite {BK2}). Concerning the complete intersection property, the two strongest results included in the literature are: (i) if $A$ is a regular ring, $B$ a flat $A$-algebra which is a complete intersection ring and $C$ an $A$-algebra of finite type which is a regular ring, then $B\otimes_AC$ is complete intersection \cite [Part 1, Theorem 2]{Wa}; (ii) if $A$ is a field, $B$ and $C$ are $A$-algebras which are complete intersection as rings and such that $B\otimes_AC$ is a noetherian ring, then $B\otimes_AC$ is complete intersection \cite [Theorem 6]{TY}.

But none of these papers have used Andr\'e-Quillen homology. In this short note, we will see that the use of this theory makes things easier allowing in particular to obtain almost immediately a result stronger than those two just mentioned (Theorem \ref{ci}).

Something similar happens for the regularity property. Several results had been obtained, from the first \cite [6.7.4.1]{EGA42} until the most recent and strongest \cite [Theorem 2.11]{BK2}. Again, with the help of Andr\'e-Quillen homology, we obtain with ease a stronger result (Theorem \ref {regularity}).

A close inspection of our proofs reveals that in fact, once the results are stated, we could also find proofs of them without the need of Andr\'e-Quillen homology. But the advantage of using this theory is that we can formulate these problems as a simple vanishing question of a term in an exact sequence whose remaining terms are in close relation with flatness, regularity and complete intersection properties, so we can see at a first glance several conditions sufficient for the wanted result.\\

If $B$ is an (always commutative) $A$-algebra and $M$ a $B$-module, we will use Andr\'e-Quillen homology modules denoted $H_n(A,B,M)$ for all $n\geq 0$. We will refer to \cite {An1974} for the properties that we will use.

\begin{thm}\label{regularity}
Let $A$ be a noetherian ring, $B$ and $C$  two noetherian $A$-algebras such that $B\otimes_AC$ is a noetherian ring. For each maximal ideal $\mathfrak{q}$ of $B\otimes_AC$ let $\mathfrak{q}_A$, $\mathfrak{q}_B$, $\mathfrak{q}_C$ denote its contractions in $A$, $B$ and $C$ respectively and let $k(\mathfrak{q})$ be the residue field of $(B\otimes_AC)_{\mathfrak{q}}$. Assume that for each $\mathfrak{q}$ at least one of the three local $A_{\mathfrak{q}_A}$-algebras $B_{\mathfrak{q}_B}$, $C_{\mathfrak{q}_C}$ or $k(\mathfrak{q})$ is formally smooth for the topology of its maximal ideal \cite [19.3.1]{EGA41}. If $B$ and $C$ are regular rings, then $B\otimes_AC$ is regular.
\end{thm}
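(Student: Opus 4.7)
The strategy is to use André--Quillen homology. By Quillen's theorem, a noetherian local ring $R$ with residue field $k$ is regular if and only if $H_2(R,k,k)=0$; and by André's criterion, a local homomorphism $A\to B$ of noetherian local rings is formally smooth for the maximal-ideal topologies if and only if $H_1(A,B,k_B)=0$, in which case also $H_n(A,B,M)=0$ for every $n\geq 1$ and every $B$-module $M$. Since André--Quillen homology localizes, the problem reduces to showing, for each maximal ideal $\mathfrak{q}$ of $D:=B\otimes_A C$, setting $R:=D_\mathfrak{q}$ and $k:=k(\mathfrak{q})$, that $H_2(R,k,k)=0$.

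Assume first that $B_{\mathfrak{q}_B}$ is formally smooth over $A_{\mathfrak{q}_A}$ (the case of $C_{\mathfrak{q}_C}$ is symmetric). Then $B_{\mathfrak{q}_B}$ is $A_{\mathfrak{q}_A}$-flat, and since $R$ is a localization of $B_{\mathfrak{q}_B}\otimes_{A_{\mathfrak{q}_A}}C_{\mathfrak{q}_C}$, the André--Quillen flat base change theorem combined with localization gives $H_n(C_{\mathfrak{q}_C},R,k)\cong H_n(A_{\mathfrak{q}_A},B_{\mathfrak{q}_B},k)=0$ for every $n\geq 1$. Hence $R$ is formally smooth over $C_{\mathfrak{q}_C}$; equivalently, $R$ is flat over $C_{\mathfrak{q}_C}$ with geometrically regular closed fibre. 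Combined with the regularity of $C_{\mathfrak{q}_C}$, this implies that $R$ is regular.

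The main obstacle is the remaining case, in which $k$ is formally smooth over $A_{\mathfrak{q}_A}$ but neither $B_{\mathfrak{q}_B}$ nor $C_{\mathfrak{q}_C}$ is assumed so. The plan is to show that this hypothesis alone already forces the preceding case to hold. From $H_n(A_{\mathfrak{q}_A},k,k)=0$ for $n\geq 1$, the Jacobi--Zariski sequence for $A_{\mathfrak{q}_A}\to k(\mathfrak{q}_A)\to k$, together with the obvious vanishing $H_0(A_{\mathfrak{q}_A},k(\mathfrak{q}_A),k)=\Omega_{k(\mathfrak{q}_A)/A_{\mathfrak{q}_A}}\otimes k=0$, yields $H_1(k(\mathfrak{q}_A),k,k)=0$ (so $k/k(\mathfrak{q}_A)$ is separable) and then, at the next degree, $H_1(A_{\mathfrak{q}_A},k(\mathfrak{q}_A),k)=(\mathfrak{m}_{A_{\mathfrak{q}_A}}/\mathfrak{m}_{A_{\mathfrak{q}_A}}^2)\otimes_{k(\mathfrak{q}_A)} k=0$; Nakayama then forces $A_{\mathfrak{q}_A}$ to be a field. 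Over this field, $B_{\mathfrak{q}_B}$ is automatically flat, and its residue field $k(\mathfrak{q}_B)$, being a subfield of the separable extension $k/A_{\mathfrak{q}_A}$, is itself separable over $A_{\mathfrak{q}_A}$; together with the regularity of $B_{\mathfrak{q}_B}$, this gives geometric regularity and hence formal smoothness of $B_{\mathfrak{q}_B}$ over $A_{\mathfrak{q}_A}$, so that we fall back on the first case.
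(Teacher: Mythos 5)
Your proof is correct, but it follows a genuinely different route from the paper's. The paper localizes at $\mathfrak{q}$, observes that the hypotheses force $\mathrm{Tor}_i^{A_{\mathfrak{q}_A}}(B_{\mathfrak{q}_B},C_{\mathfrak{q}_C})=0$ for $i>0$, and then invokes Andr\'e's exact sequence for a Tor-independent tensor product \cite[5.21]{An1974}, which places $H_2((B\otimes_AC)_{\mathfrak{q}},k(\mathfrak{q}),k(\mathfrak{q}))$ between $H_2(B_{\mathfrak{q}_B},k(\mathfrak{q}),k(\mathfrak{q}))\oplus H_2(C_{\mathfrak{q}_C},k(\mathfrak{q}),k(\mathfrak{q}))$ (zero by regularity) and the kernel of $H_1(A_{\mathfrak{q}_A},k(\mathfrak{q}),k(\mathfrak{q}))\to H_1(B_{\mathfrak{q}_B},k(\mathfrak{q}),k(\mathfrak{q}))\oplus H_1(C_{\mathfrak{q}_C},k(\mathfrak{q}),k(\mathfrak{q}))$; each of the three formal-smoothness alternatives kills that kernel, so all cases are handled uniformly by one sequence. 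You instead avoid \cite[5.21]{An1974} entirely: you use flat base change plus localization to transport the vanishing $H_*(A_{\mathfrak{q}_A},B_{\mathfrak{q}_B},-)=0$ to $H_*(C_{\mathfrak{q}_C},R,-)=0$, conclude that $R$ is formally smooth over $C_{\mathfrak{q}_C}$, and finish with the classical ascent of regularity along a flat local map with regular closed fibre; for the third alternative you prove the nice structural fact that formal smoothness of $k(\mathfrak{q})$ over $A_{\mathfrak{q}_A}$ forces $A_{\mathfrak{q}_A}$ to be a field and $B_{\mathfrak{q}_B}$ to be geometrically regular over it, reducing to the first case. Both arguments are sound (your Jacobi--Zariski computations, including the use of $H_0(A_{\mathfrak{q}_A},k(\mathfrak{q}_A),k)=0$, $H_2$ of a field extension vanishing, $H_1(A,k_A,k)=(\mathfrak{m}/\mathfrak{m}^2)\otimes k$, and the separability of subextensions, all check out). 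What the paper's method buys is uniformity and reusability: the same sequence shifted one degree gives Theorem \ref{ci}, which is the author's advertised point. What your method buys is a more explicit picture --- in particular the observation, only implicit in the paper's appeal to \cite[19.7.1]{EGA41}, that the third hypothesis collapses to the first two --- at the cost of a case analysis and of invoking the harder ``formally smooth $\Leftrightarrow$ flat with geometrically regular fibre'' equivalence rather than just the $H_1$ criterion.
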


\begin{proof}
For each maximal ideal $\mathfrak{q}$ of $B\otimes_AC$, we have $Tor_i^{A_{\mathfrak{q}_A}}(B_{\mathfrak{q}_B},C_{\mathfrak{q}_C})=0$ for any $i>0$ (\cite [19.7.1]{EGA41} or \cite [16.18]{An1974}). We have an exact sequence \cite [5.21]{An1974}\\

\hspace{8mm} $ ... \rightarrow H_2(B_{\mathfrak{q}_B},k(\mathfrak{q}),k(\mathfrak{q}))\bigoplus H_2(C_{\mathfrak{q}_C},k(\mathfrak{q}),k(\mathfrak{q})) \rightarrow H_2((B\otimes_AC)_{\mathfrak{q}},k(\mathfrak{q}),k(\mathfrak{q}))$
$ \rightarrow H_1(A_{\mathfrak{q}_A},k(\mathfrak{q}),k(\mathfrak{q})) \xrightarrow{\alpha_{\mathfrak{q}}} H_1(B_{\mathfrak{q}_B},k(\mathfrak{q}),k(\mathfrak{q}))\bigoplus H_1(C_{\mathfrak{q}_C},k(\mathfrak{q}),k(\mathfrak{q})) \rightarrow ...$\\
\\
since $H_2((B\otimes_AC)_{\mathfrak{q}},k(\mathfrak{q}),k(\mathfrak{q})) = H_2(B_{\mathfrak{q}_B}\otimes_{A_{\mathfrak{q}_A}}C_{\mathfrak{q}_C},k(\mathfrak{q}),k(\mathfrak{q}))$ by \cite [5.27]{An1974}.

Since $B_{\mathfrak{q}_B}$ and $C_{\mathfrak{q}_C}$ are regular local rings, we have
$$H_2(B_{\mathfrak{q}_B},k(\mathfrak{q}),k(\mathfrak{q}))\bigoplus H_2(C_{\mathfrak{q}_C},k(\mathfrak{q}),k(\mathfrak{q}))=0$$
 \cite [6.26, 7.4]{An1974}. On the other hand, if $k(\mathfrak{q})$ is formally smooth over $A_{\mathfrak{q}_A}$, then $H_1(A_{\mathfrak{q}_A},k(\mathfrak{q}),k(\mathfrak{q}))=0$ (essentially \cite [16.17]{An1974}; details can be seen in \cite [2.3.5]{MR}). Thus $H_2((B\otimes_AC)_{\mathfrak{q}},k(\mathfrak{q}),k(\mathfrak{q}))=0$ and then $(B\otimes_AC)_{\mathfrak{q}}$ is regular. For the same argument, if $B_{\mathfrak{q}_B}$ or $C_{\mathfrak{q}_C}$ is formally smooth over $A_{\mathfrak{q}_A}$, from the Jacobi-Zariski exact sequences \cite [5.1]{An1974}
$$H_1(A_{\mathfrak{q}_A},B_{\mathfrak{q}_B},k(\mathfrak{q})) \rightarrow H_1(A_{\mathfrak{q}_A},k(\mathfrak{q}),k(\mathfrak{q})) \xrightarrow{\alpha_{\mathfrak{q}_B}} H_1(B_{\mathfrak{q}_B},k(\mathfrak{q}),k(\mathfrak{q}))$$
$$H_1(A_{\mathfrak{q}_A},C_{\mathfrak{q}_C},k(\mathfrak{q})) \rightarrow H_1(A_{\mathfrak{q}_A},k(\mathfrak{q}),k(\mathfrak{q})) \xrightarrow{\alpha_{\mathfrak{q}_C}} H_1(C_{\mathfrak{q}_C},k(\mathfrak{q}),k(\mathfrak{q}))$$
we see that $\alpha_{\mathfrak{q}_B}$ or $\alpha_{\mathfrak{q}_C}$ is injective, so that $\alpha_{\mathfrak{q}}$ is injective, and then again $H_2((B\otimes_AC)_{\mathfrak{q}},k(\mathfrak{q}),k(\mathfrak{q}))=0$.
\end{proof}

We will say that a noetherian local ring $R$ is complete intersection if its completion is a quotient of a regular local ring by an ideal generated by a regular sequence. We will say that a noetherian ring $R$ is complete intersection if all its local rings are complete intersection (if $R$ is a complete intersection local ring, all its localizations at primes are complete intersection local rings \cite {Av}).

\begin{thm}\label{ci}
Let $A$ be a ring, $B$ and $C$ two $A$-algebras such that for each maximal ideal $\mathfrak{q}$ of $B\otimes_AC$ at least one of the local $A_{\mathfrak{q}_A}$-algebras $B_{\mathfrak{q}_B}$, $C_{\mathfrak{q}_C}$ is flat (e.g. if $B$ is a flat $A$-algebra). Assume that $B\otimes_AC$ is a noetherian ring. If $B$ and $C$ are complete intersection then so is $B\otimes_AC$.
\end{thm}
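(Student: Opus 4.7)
The plan is to mirror the proof of Theorem~\ref{regularity} but one degree higher, replacing the regularity characterization by the Quillen--Avramov characterization of the complete intersection property: a noetherian local ring $(R,\mathfrak{m},k)$ is complete intersection if and only if $H_n(R,k,k)=0$ for all $n\geq 3$.

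Fix a maximal ideal $\mathfrak{q}$ of $B\otimes_AC$, write $k=k(\mathfrak{q})$, and suppose without loss of generality that $B_{\mathfrak{q}_B}$ is flat over $A_{\mathfrak{q}_A}$. Flatness gives $Tor_i^{A_{\mathfrak{q}_A}}(B_{\mathfrak{q}_B},C_{\mathfrak{q}_C})=0$ for $i>0$, so the same Mayer--Vietoris sequence \cite[5.21]{An1974} used in the proof of Theorem~\ref{regularity} is available; read one degree higher it gives
$$H_3(B_{\mathfrak{q}_B},k,k)\oplus H_3(C_{\mathfrak{q}_C},k,k)\to H_3((B\otimes_AC)_{\mathfrak{q}},k,k)\to H_2(A_{\mathfrak{q}_A},k,k)\xrightarrow{\alpha} H_2(B_{\mathfrak{q}_B},k,k)\oplus H_2(C_{\mathfrak{q}_C},k,k).$$
The leftmost direct sum vanishes since $B_{\mathfrak{q}_B}$ and $C_{\mathfrak{q}_C}$ are complete intersection, reducing the problem to showing that $\alpha$ is injective; for this it suffices that one component, say $\alpha_B$, be injective.

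The Jacobi--Zariski sequence for $A_{\mathfrak{q}_A}\to B_{\mathfrak{q}_B}\to k$ yields
$$H_3(B_{\mathfrak{q}_B},k,k)\to H_2(A_{\mathfrak{q}_A},B_{\mathfrak{q}_B},k)\to H_2(A_{\mathfrak{q}_A},k,k)\xrightarrow{\alpha_B} H_2(B_{\mathfrak{q}_B},k,k),$$
whose leftmost term vanishes, so $\alpha_B$ is injective precisely when $H_2(A_{\mathfrak{q}_A},B_{\mathfrak{q}_B},k)=0$. By flat base change for Andr\'e--Quillen homology, the latter module is isomorphic to $H_2(k(\mathfrak{q}_A),F,k)$, where $F$ is the local ring at $\mathfrak{q}_B$ of the closed fiber $B_{\mathfrak{q}_B}\otimes_{A_{\mathfrak{q}_A}}k(\mathfrak{q}_A)$. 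Avramov's theorem on flat descent of the complete intersection property says that a flat local homomorphism with complete intersection target has complete intersection closed fiber; hence $F$ is a complete intersection local $k(\mathfrak{q}_A)$-algebra, and Quillen's vanishing yields $H_2(k(\mathfrak{q}_A),F,k)=0$.

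Therefore $\alpha_B$, and hence $\alpha$, is injective, so $H_3((B\otimes_AC)_{\mathfrak{q}},k,k)=0$ and $(B\otimes_AC)_{\mathfrak{q}}$ is complete intersection. Since $\mathfrak{q}$ was arbitrary, $B\otimes_AC$ is complete intersection. The main obstacle is the vanishing $H_2(A_{\mathfrak{q}_A},B_{\mathfrak{q}_B},k)=0$: flatness alone does not suffice, and one really needs Avramov's descent to translate ``$B_{\mathfrak{q}_B}$ is complete intersection'' into ``the closed fiber of $A_{\mathfrak{q}_A}\to B_{\mathfrak{q}_B}$ is complete intersection''.
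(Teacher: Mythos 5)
Your proof is correct and follows the same skeleton as the paper's: the same Tor-vanishing from flatness, the same exact sequence of \cite[5.21]{An1974} read in degrees $2$ and $3$, the same characterization of complete intersection local rings by the vanishing of $H_3$, and the same reduction to the injectivity of $H_2(A_{\mathfrak{q}_A},k,k)\to H_2(B_{\mathfrak{q}_B},k,k)\oplus H_2(C_{\mathfrak{q}_C},k,k)$. The one place you genuinely diverge is the justification of that injectivity. The paper quotes Avramov's theorem directly in the form: for \emph{any} flat local homomorphism $A\to B$, the induced map $H_2(A,k,k)\to H_2(B,k,k)$ is injective (\cite{Av}, see \cite[4.2.2]{MR}); no complete intersection hypothesis on the target is needed there. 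You instead establish the stronger vanishing $H_2(A_{\mathfrak{q}_A},B_{\mathfrak{q}_B},k)=0$ via flat base change to the closed fiber $F$, Avramov's descent of the complete intersection property along flat local maps, and the vanishing of $H_2$ of a complete intersection local algebra over a field. That last vanishing is true, but since $F$ need not be essentially of finite type over $k(\mathfrak{q}_A)$ it is not literally Quillen's l.c.i.\ vanishing; the clean justification is the Jacobi--Zariski sequence for $k(\mathfrak{q}_A)\to F\to k_F$ together with $H_3(F,k_F,-)=0$ ($F$ complete intersection) and Andr\'e's theorem that $H_n(K,L,-)=0$ for $n\geq 2$ for any field extension \cite[7.4]{An1974}. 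Both routes ultimately rest on the same paper of Avramov, so yours is not more elementary, and your closing remark that ``flatness alone does not suffice'' applies to your stronger intermediate claim $H_2(A_{\mathfrak{q}_A},B_{\mathfrak{q}_B},k)=0$, not to the injectivity actually needed, which Avramov's theorem delivers from flatness alone.
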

\begin{proof}
As in the proof of theorem \ref{regularity} we have an exact sequence for each $\mathfrak{q}$\\

\hspace{8mm} $ ... \rightarrow H_3(B_{\mathfrak{q}_B},k(\mathfrak{q}),k(\mathfrak{q}))\bigoplus H_3(C_{\mathfrak{q}_C},k(\mathfrak{q}),k(\mathfrak{q})) \rightarrow H_3((B\otimes_AC)_{\mathfrak{q}},k(\mathfrak{q}),k(\mathfrak{q}))$
$ \rightarrow H_2(A_{\mathfrak{q}_A},k(\mathfrak{q}),k(\mathfrak{q})) \xrightarrow{\beta_{\mathfrak{q}}} H_2(B_{\mathfrak{q}_B},k(\mathfrak{q}),k(\mathfrak{q}))\bigoplus H_2(C_{\mathfrak{q}_C},k(\mathfrak{q}),k(\mathfrak{q})) \rightarrow ...$\\
\\

If $B_{\mathfrak{q}_B}$ is a flat $A_{\mathfrak{q}_A}$-algebra, then  the homomorphism $H_2(A_{\mathfrak{q}_A},k(\mathfrak{q}),k(\mathfrak{q})) \xrightarrow{\beta_{\mathfrak{q}_B}} H_2(B_{\mathfrak{q}_B},k(\mathfrak{q}),k(\mathfrak{q}))$ is flat by \cite {Av} (see \cite [4.2.2]{MR}), and similarly for $C$ instead of $B$. Thus $\beta_{\mathfrak{q}}$ is injective. On the other hand, using \cite [10.18]{An1974}, the proof of \cite [6.27]{An1974} shows that a local ring $(R,k)$ is complete intersection if and only if $H_3(R,k,k)=0$. So $H_3(B_{\mathfrak{q}_B},k(\mathfrak{q}),k(\mathfrak{q}))\bigoplus H_3(C_{\mathfrak{q}_C},k(\mathfrak{q}),k(\mathfrak{q}))=0$ by hypothesis and then $H_3((B\otimes_AC)_{\mathfrak{q}},k(\mathfrak{q}),k(\mathfrak{q}))=0$, showing that $(B\otimes_AC)_{\mathfrak{q}}$ is complete intersection.
\end{proof}

\begin{rems}
(i) In Theorem \ref{regularity}, if $A$ is quasi-excellent noetherian local ring and $B$ is a noetherian local formally smooth $A$-algebra, then for any prime ideal $\mathfrak{p}$ of $B$, $B_{\mathfrak{p}}$ is formally smooth over $A_{\mathfrak{p}_A}$ \cite {localisation}, so that the formal smoothness hypothesis on the theorem are satisfied.\\*
(ii) These results are valid even if $B\otimes_AC$ is not noetherian, provided we use the adequate definitions of regularity and complete intersection on this case \cite [6.10]{PSPM}, \cite {AnCI}. For instance, in the proof of Theorem \ref{ci}, we can use the exact sequence for all dimensions $n\geq 2$ instead of 2 and 3 only, and observe that since $B$ and $C$ are complete intersection, by the flatness assumptions we have that $A_{\mathfrak{q}_A}$ is complete intersection for any $\mathfrak{q}$ (\cite {Av}), and so $\beta_{\mathfrak{q}}$ vanishes in dimension 2 as in the proof, and in dimensions $\geq 3$ because $A_{\mathfrak{q}_A}$ is complete intersection. The proof of Theorem \ref{regularity} works with similar changes.\\*
(iii) Together with \cite [19.5]{AnLNM}, another result that seemed to have gone unnoticed for some time concerning the tensor product of two algebras is the formula dim$(E\otimes_kL)$ = min\{tr.deg.$E|k$,tr.deg.$L|k$\}, for two field extensions. This formula was proved in \cite [Errata et Addenda, Err$_{IV}$,19, page 349]{EGA44} but remained apparently unnoticed until it was obtained again ten years later in \cite {Sh2}.\\*
\end{rems}


\end{document}